% !TEX TS-program = pdflatex
% !TEX encoding = UTF-8 Unicode

% This is a simple template for a LaTeX document using the "article" class.
% See "book", "report", "letter" for other types of document.

\documentclass[11pt]{article} % use larger type; default would be 10pt

\usepackage{bbm}
\usepackage[utf8]{inputenc} % set input encoding (not needed with XeLaTeX)
\usepackage{enumerate,amsmath,amssymb,amsthm,bbm}
\usepackage{hyperref}
\usepackage{graphicx}
\usepackage[]{algorithm2e}
\usepackage[utf8]{inputenc}
\usepackage[L7x]{fontenc}
\usepackage[english]{babel}
\usepackage{amssymb, amsmath, amsthm}
\usepackage{enumerate}
\usepackage{hyperref}
\usepackage{a4wide}
\usepackage[usenames]{color}
\usepackage{soul}
\usepackage[all]{xy}
\usepackage{array}
\usepackage{tabularx}
\usepackage[table]{xcolor}
\usepackage{booktabs}
\usepackage{verbatim}
\usepackage{rotating}
\usepackage{xtab}
\usepackage{mathtools}
\usepackage{amsmath}
\usepackage{lmodern}
\usepackage{caption}
\usepackage{multirow}
\usepackage{float}
\usepackage{mdwlist}
\usepackage{breqn}
\usepackage{amsmath}
\usepackage{amsfonts}
\usepackage{setspace}

%%% Examples of Article customizations
% These packages are optional, depending whether you want the features they provide.
% See the LaTeX Companion or other references for full information.

%%% PAGE DIMENSIONS
\usepackage{geometry} % to change the page dimensions
\geometry{a4paper} % or letterpaper (US) or a5paper or....
% \geometry{margin=2in} % for example, change the margins to 2 inches all round
% \geometry{landscape} % set up the page for landscape
%   read geometry.pdf for detailed page layout information

\usepackage{graphicx} % support the \includegraphics command and options

% \usepackage[parfill]{parskip} % Activate to begin paragraphs with an empty line rather than an indent

%%% PACKAGES
\usepackage[yyyymmdd]{datetime}
\usepackage{booktabs} % for much better looking tables
\usepackage{array} % for better arrays (eg matrices) in maths
\usepackage{paralist} % very flexible & customisable lists (eg. enumerate/itemize, etc.)
\usepackage{verbatim} % adds environment for commenting out blocks of text & for better verbatim
\usepackage{subfig} % make it possible to include more than one captioned figure/table in a single float
% These packages are all incorporated in the memoir class to one degree or another...

%%% HEADERS & FOOTERS
\usepackage{fancyhdr} % This should be set AFTER setting up the page geometry
\pagestyle{fancy} % options: empty , plain , fancy
 % customise the layout...
\lhead{}\chead{}\rhead{\thepage}
\lfoot{}\cfoot{}\rfoot{}

%%% SECTION TITLE APPEARANCE
\usepackage{sectsty}
\allsectionsfont{\sffamily\mdseries\upshape} % (See the fntguide.pdf for font help)
% (This matches ConTeXt defaults)

%%% ToC (table of contents) APPEARANCE
\usepackage[nottoc,notlof,notlot]{tocbibind} % Put the bibliography in the ToC
\usepackage[titles,subfigure]{tocloft} % Alter the style of the Table of Contents

 % No bold!

\theoremstyle{plain}
\newtheorem{theorem}{Theorem}
\newtheorem{lemma}{Lemma}

\newtheorem{proposition}{Proposition}

\newtheorem{corollary}{Corollary}
\theoremstyle{remark}

%%% END Article customizations

%%% The "real" document content comes below...

\title{On the Congruence of Finite Generalized Harmonic Numbers Sums Modulo $p^2$}
\author{Aidas Medžiūnas}
%\date{} % Activate to display a given date or no date (if empty),
         % otherwise the current date is printed 

\begin{document}
\maketitle
\begin{abstract}
In this paper we investigate congruence relationships of particular finite generalized harmonic numbers sums. We suggest more transparent and simpler method to analyse these sums and present several additional results for certain special cases.
\end{abstract}
\section{Introduction}

We will define the harmonic number as the sum of the reciprocals of the first $n$ natural numbers: 
\begin{equation*}\label{eq:harmonicNumber}
	H_{n}=\sum_{k=1}^n\dfrac{1}{k},
\end{equation*}
Investigation of the harmonic numbers properties traces its history back to Ancient Greece and has the fundamental importance to the several fields of mathematics. These numbers are closely related to Riemann zeta function and appear in many expressions of other special functions \cite{Choi}.   
Naturally the definition of harmonic numbers can be expanded to the sums of reciprocal powers. Hence, we will denote generalized harmonic numbers (GHN) as
\begin{equation*}\label{eq:genHarmonicNumber}
	H_{n}^{(r)}:=\sum_{k=1}^n\dfrac{1}{k^r},
\end{equation*}
where $r=\sigma+it$ is a complex variable. However, in the study of divisibility it is assumed that $r$ is an integer.

Consider the behaviour of the finite sums of terms involving GHN in the form 
\begin{equation*}
	S_{a,b,k,p}:=\sum_{i=1}^{p-1}i^b\left(H_{i}^{(a)}\right)^k,
\end{equation*}
where $a,b,k\in\mathbb{Z}$ and $p$ is a prime number. For simplicity of notation, we write $S_{a,b,k}$ instead of $S_{a,b,k,p}$. These sums demonstrate interesting results modulo $p$, $p^2$, $p^3$, yet no universal theory for all $a,b,k$ values exists. Therefore one has to approach the study of this mathematical object from variety of narrower, more specific directions.

The number of authors investigated cases for the particular coefficients $a,b,k$. For example, Meštrović \cite{Mestrovic} generalised the problem, conjectured by Tauraso \cite{Tauraso} and proven by Tyler \cite{Tyler}, elegantly demonstrating that
\begin{equation*}
	S_{1,-2,1}\equiv S_{1,-1,2}\equiv-\dfrac{3}{p^2}H_{p-1}\equiv\dfrac{3}{2p}H^{(2)}_{p-1}\pmod{p^2}.
\end{equation*}
In his other work Meštrović with Andjić \cite{Mestrovic2} following the work done by Sun \cite{Sun} showed that for the following binomial sums of harmonic numbers, when $p>3$, we get
\begin{equation*}
	\sum_{j=m}^{p-1}\binom{j}{m}H_{j}\equiv\dfrac{(-1)^m}{m+1}\left(1-pH_{m+1}+\dfrac{p^2}{2}\left(H^2_{m+1}-H_{m+1}^{(2)}\right)\right)\pmod{p^3}
\end{equation*}
and therefore (see Corollary 1.3\cite{Mestrovic2})
\begin{align*}
	S_{1,1,1}&\equiv-\dfrac{p^2-3p+2}{4}\pmod{p^3},\\
	S_{1,2,1}&\equiv\dfrac{15p^2-17p+6}{36}\pmod{p^3},\\
	S_{1,3,1}&\equiv-\dfrac{21p^2-10p}{48}\pmod{p^3}.
\end{align*}

Other authors looked for more general ways to describe the behaviour of these sums, when one or two coefficients are not specified and showed that the connection between these sums and the Bernoulli numbers exists. For example, Zhao \cite{Zhao} using the properties of multiple harmonic sums showed that for two positive integers $a$ and $b$, which are of the same parity such that $p>a+b+1$, we get
\begin{align}\label{eq:Zhao}
	S_{a,-b,1}&\equiv p\left(k_{a,b}+a+b\right)\dfrac{B_{p-a-b-1}}{2(a+b+1)}\pmod{p^2},
\end{align}
where $k_{a,b}=(-1)^a\left[\binom{a+b+1}{b}-\binom{a+b+1}{a}\right]$ and $B$ are the Bernoulli numbers.

Another interesting result in this field is Conjecture 1.2 proposed by Sun \cite{Sun} and proved by Sun and Zhao \cite{Sun2}, which states that for any odd prime $p$ and a positive integer such that $p-1\nmid6a$, we get
\begin{equation}\label{eq:lyginiaiDalumai}
	S_{2a,-2a,2}\equiv 0\pmod{p}.
\end{equation}
In this paper we are concerned with the results recently obtained by Wang and Yang \cite{kinieciaip2}. They showed that
\begin{align}\label{eq:kinieciai1}
	S_{1,b,1}&\equiv(1-p)B_b-\dfrac{p}{b+1}\sum_{j=0}^b\binom{b+1}{j}B_jB_{b-j}\pmod{p^2}
\end{align}
and
\begin{align}\label{eq:kinieciai2}
	S_{1,b,2}&\equiv B_{b-1}-\dfrac{p(b+2)}{2b}\sum_{j=0}^{b-1}\binom{b}{j}B_jB_{b-j-1}+\dfrac p4\sum_{j=0}^{b-1}B_jB_{b-j-1}\pmod{p^2},	
\end{align}
where $b$ is an odd positive integer such that $3<b<p-1$.

In our work we will (by more transparent and simpler means) prove more general result for $S_{a,b,1}$ and $S_{a,b,2}$. Furthermore, our approach will provide several new results for particular $a,b,k$, complementing previous works in the field found in Sections \ref{auxi} and \ref{particular}.
\section{Main results}
In this section we present our main results about the congruence of $S_{a,b,1}$ and $S_{a,b,2}$.
\begin{theorem}\label{th:main1}
	Let $a,b$ be non-negative integers and $p$ a prime number such that $\max(a,b)+3\leq p$. Then, the following congruences are true:
\begin{enumerate}
\item
if $0<a\leq b$
\begin{align*}
	S_{a,b,1}\equiv& \dfrac{1}{b+1}\binom{b+1}{a}B_{b-a+1}-\dfrac{p}{b+1}\sum_{\substack{j=0}}^{b-a+1}\binom{b+1}{j}B_{b-a+1-j}B_j\\&-\dfrac{p}{b+1}\sum_{\substack{j=b-a+2\\a\geq2}}^{b}\binom{b+1}{j}B^\#_{a,b,j,p}\pmod{p^2},
\end{align*}
\item
if $0<a=b+1$
\begin{align*}
	S_{a,b,1}\equiv& \dfrac{1-p}{a}-\dfrac pa\sum_{\substack{j=1}}^{a-1}\binom{a}{j}B^\#_{a,a-1,j,p}\pmod{p^2},
\end{align*}
\item
if $0<b+1<a$
\begin{align*}
	S_{a,b,1}\equiv& -\dfrac{p}{b+1}\sum_{\substack{j=0}}^{b}\binom{b+1}{j}B^\#_{a,b,j,p}\pmod{p^2},
\end{align*}
\end{enumerate}
where $B^\#_{a,b,j,p}=\dfrac{a-b-1+j}{a-b+j}B_{p-a+b-j}B_j$.
\end{theorem}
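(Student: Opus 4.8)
The plan is to start from the double sum $S_{a,b,1}=\sum_{i=1}^{p-1}i^{b}\sum_{k=1}^{i}k^{-a}$ and interchange the order of summation, grouping for each fixed $k$ the indices $i$ with $k\le i\le p-1$. Writing $T_{b}(n)=\sum_{i=1}^{n}i^{b}$ for the power sum, this gives
\[
S_{a,b,1}=\sum_{k=1}^{p-1}\frac{1}{k^{a}}\sum_{i=k}^{p-1}i^{b}
=\sum_{k=1}^{p-1}\frac{1}{k^{a}}\bigl(T_{b}(p-1)-T_{b}(k-1)\bigr).
\]
The purpose of this move is to decouple the inner power sum from the reciprocal power $k^{-a}$ via Faulhaber's formula $T_{b}(k-1)=\frac{1}{b+1}\sum_{j=0}^{b}\binom{b+1}{j}B_{j}k^{b+1-j}$ (with $B_{1}=-\tfrac12$, valid for $b\ge1$ since then $0^{b}=0$; the degenerate case $b=0$ I would treat directly through $S_{a,0,1}=pH_{p-1}^{(a)}-H_{p-1}^{(a-1)}$, which reproduces cases 2 and 3). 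Substituting and exchanging finite sums yields
\[
S_{a,b,1}=T_{b}(p-1)\,H_{p-1}^{(a)}-\frac{1}{b+1}\sum_{j=0}^{b}\binom{b+1}{j}B_{j}\sum_{k=1}^{p-1}k^{\,b+1-a-j}.
\]

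Everything now reduces to three explicit ingredients modulo $p^{2}$: the power sum $T_{b}(p-1)$, the generalized harmonic number $H_{p-1}^{(a)}$, and the generalized power sums $P_{e}:=\sum_{k=1}^{p-1}k^{e}$ with exponent $e=e_{j}:=b+1-a-j$. Using $\max(a,b)+3\le p$ to keep the relevant Bernoulli indices out of the range where von Staudt--Clausen puts $p$ in a denominator, I would first record that $T_{b}(p-1)\equiv pB_{b}\pmod{p^{2}}$ and $H_{p-1}^{(a)}\equiv0\pmod p$. Since each factor is divisible by $p$, the product $T_{b}(p-1)\,H_{p-1}^{(a)}$ is $\equiv0\pmod{p^{2}}$ and drops out, so the entire content of the theorem lives in the second sum.

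Next I would classify the terms by the sign of $e_{j}=b+1-a-j$ as $j$ runs over $0,\dots,b$. For $e_{j}\ge1$ (i.e. $j\le b-a$) I use $P_{e_{j}}\equiv pB_{e_{j}}\pmod{p^{2}}$; the single index $j=b-a+1$ gives $e_{j}=0$ and $P_{0}=p-1$, which is the only term of order $p^{0}$ and, through the factor $(p-1)=1-p$, simultaneously produces the leading constant $\frac{1}{b+1}\binom{b+1}{a}B_{b-a+1}$ and its $-p$ companion inside the first displayed $p$-sum; and for $e_{j}\le-1$ (i.e. $j\ge b-a+2$) the sum becomes a genuine GHN, $P_{e_{j}}=H_{p-1}^{(a+j-b-1)}$. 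The three regimes $0<a\le b$, $a=b+1$, $0<b+1<a$ are exactly the three possibilities for which of these ranges are nonempty inside $0\le j\le b$ (the $e_{j}=0$ term present vs. at the boundary $j=0$ vs. absent, and the negative range present only when $a\ge2$), which is precisely why the statement splits as it does. Carrying out the substitution and reindexing in each regime and collecting terms will reproduce the three claimed formulas.

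The main obstacle, and the decisive auxiliary input, is the congruence for generalized harmonic numbers modulo $p^{2}$, namely $H_{p-1}^{(r)}\equiv\frac{r}{r+1}\,p\,B_{p-r-1}\pmod{p^{2}}$ for $1\le r\le p-4$. Granting this, each negative-exponent term becomes $-\frac{p}{b+1}\binom{b+1}{j}B_{j}\frac{r}{r+1}B_{p-r-1}$ with $r=a+j-b-1$, and the identities $\frac{r}{r+1}=\frac{a-b-1+j}{a-b+j}$ and $p-r-1=p-a+b-j$ identify this with $-\frac{p}{b+1}\binom{b+1}{j}B^{\#}_{a,b,j,p}$, matching the theorem. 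Proving this GHN congruence is the technical heart: for odd $r$ it follows cleanly by pairing $k$ with $p-k$ and expanding $(p-k)^{-r}\equiv(-1)^{r}k^{-r}(1+rpk^{-1})\pmod{p^{2}}$, which forces $B_{p-r-1}=0$; for even $r$ the honest route is to expand $k^{-r}$ to first order via Euler's theorem and apply Kummer's congruences to relate the resulting high-index Bernoulli number to $B_{p-r-1}$. The hypothesis $\max(a,b)+3\le p$ is exactly what keeps every Bernoulli index appearing here in the $p$-integral range, so that no stray $p^{2}$-order term contaminates the final congruence.
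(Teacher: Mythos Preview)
Your proposal is correct and follows essentially the same route as the paper. The paper packages the interchange-of-summation step as an application of its two-dimensional identity $\sum_i i^{-a}H_i^{(b)}+\sum_i i^{-b}H_i^{(a)}=H_n^{(a)}H_n^{(b)}+H_n^{(a+b)}$ (with the roles $(-b,a)$), then expands $H_i^{(-b)}$ by Faulhaber at upper limit $i$ and absorbs an extra $H_{p-1}^{(a-b)}$ term; your direct interchange together with Faulhaber for $T_b(k-1)$ lands on the identical expression $-\tfrac{1}{b+1}\sum_{j=0}^{b}\binom{b+1}{j}B_j H_{p-1}^{(a-b-1+j)}$ without that absorption step, after which both arguments invoke Glaisher's congruence $H_{p-1}^{(r)}\equiv \tfrac{r}{r+1}pB_{p-r-1}\pmod{p^2}$ and split into the three ranges of $j$ exactly as you describe.
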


Note that, taking $a=1$ in Theorem \ref{th:main1} we get Eq. (\ref{eq:kinieciai1}). Furthermore, for more explicit expressions of $S_{a,b,1}$, depending on the parity of $a,b$ see Corollaries \ref{col:nelyginiai} and \ref{col:lyginiai}.
\begin{theorem}\label{th:main2}
	For positive integers $a,b$, such that $2\leq a<b$, $2\nmid b$ and a prime number $p:\max(2a,b)+3\leq p$ the following congruences are true:
\begin{enumerate}
\item
	if $b+1<2a$
\begin{align*}
	S_{a,b,2}\equiv&\dfrac{p}{b-a+1}\binom{b}{a}B^\#_{2a,b,b-a+1,p}-\dfrac{p}{b-a+1}\sum_{\substack{j=0}}^{b-a}\binom{b-a+1}{j}B^\#_{2a,b,j,p}\\&-\dfrac{p}{a(a+1)}\sum_{j=b-a+2}^b\binom{b-a-j-1}{a-1}\binom{b}{j}B_jB_{p-2a+b-j}\pmod{p^2},
\end{align*}
\item
if $b+1=2a$
\begin{align*}
	S_{a,b,2}\equiv&\dfrac{1-p}{a}-\dfrac{p}{2a(a+1)}-\dfrac{p}{a}\sum_{\substack{j=1}}^{a-1}\binom{a}{j}B^\#_{2a,2a-1,j,p}\\&-\dfrac{p}{a(a+1)}\sum_{j=a+1}^{2a-1}\binom{a-j-2}{a-1}\binom{2a-1}{j}B_jB_{p-j+1}\pmod{p^2},
\end{align*}
\item
if $b+1>2a$, 
\begin{align*}
S_{a,b,2}\equiv&\dfrac{1-p}{b-a+1}\binom{b-a+1}{a}B_{b-2a+1}-\dfrac{p}{b-a+1}\sum_{j=0}^{b-a}\binom{b-a+1}{j}B^\#_{2a,b,j,p}\\&-\dfrac{p}{(a+1)(b+1)}\sum_{\substack{j=0}}^{b-2a+1}\binom{b+1}{j}\binom{b-a+1-j}{a}B_{b-2a+1-j}B_{j}\\&-\dfrac{p}{a(a+1)}\sum_{j=b-a+1}^b\binom{b-a-j-1}{a-1}\binom{b}{j}B_jB_{p-2a+b-j}\pmod{p^2}.
\end{align*}
\end{enumerate}
\end{theorem}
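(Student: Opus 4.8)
The plan is to reduce the square to the linear case already settled in Theorem~\ref{th:main1}, plus one genuinely new off-diagonal double sum. Writing $x_j=j^{-a}$ and squaring $H_i^{(a)}=\sum_{j\le i}x_j$ gives the elementary identity
\[
\bigl(H_i^{(a)}\bigr)^2=H_i^{(2a)}+2\!\!\sum_{1\le j<l\le i}\frac{1}{j^a l^a},
\]
so that $S_{a,b,2}=S_{2a,b,1}+2T$, where $T:=\sum_{i=1}^{p-1}i^b\sum_{1\le j<l\le i}(j^al^a)^{-1}$. The first piece is handled verbatim by Theorem~\ref{th:main1} with first parameter $2a$: the trichotomy $b+1<2a$, $b+1=2a$, $b+1>2a$ of Theorem~\ref{th:main2} is precisely the trichotomy $2a>b+1$, $2a=b+1$, $2a\le b$ deciding which branch of Theorem~\ref{th:main1} governs $S_{2a,b,1}$. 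This correspondence is what makes the three cases line up, and it already accounts for the leading Bernoulli term and the $B^\#_{2a,b,j,p}$ sums in each branch.

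Next I would evaluate $T$ by interchanging the order of summation,
\[
T=\sum_{1\le j<l\le p-1}\frac{1}{j^al^a}\sum_{i=l}^{p-1}i^b,
\]
and applying Faulhaber's formula $\sum_{i=0}^{n-1}i^b=\tfrac{1}{b+1}\sum_{t=0}^{b}\binom{b+1}{t}B_t\,n^{b+1-t}$ at $n=p$ and $n=l$. Since $b$ is odd with $b\ge 3$ and $\max(2a,b)+3\le p$, the $n=p$ contribution is $pB_b\equiv 0\pmod{p^2}$, leaving only the polynomial-in-$l$ part. Carrying out the $j$-summation turns the inner factor into $H_{l-1}^{(a)}$, and collecting exponents reduces $T$ modulo $p^2$ to a finite combination of residual sums $\sum_{l}l^{\,c}H_l^{(a)}$ and pure power sums $\sum_l l^{\,c}$ with $c=b-a+1-t$.

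The technical core is evaluating these residual sums modulo $p^2$. Pure power sums collapse to single Bernoulli numbers via $\sum_{i=1}^{p-1}i^m\equiv pB_m\pmod{p^2}$, while the negative-exponent harmonic sums are reduced with the Glaisher--Wolstenholme congruences $H_{p-1}^{(m)}\equiv\frac{m}{m+1}\,pB_{p-1-m}\pmod{p^2}$ for even $m$ (and $\equiv0$ for odd $m$). It is exactly this congruence, at $m=2a-b+j-1$, that manufactures the factor $\tfrac{2a-b-1+j}{2a-b+j}B_{p-2a+b-j}$ defining $B^\#_{2a,b,j,p}$; the oddness of $b$ forces the surviving indices to be even, matching the vanishing of $B_j$ for odd $j$. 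The residual sums with $c<0$ are not covered by Theorem~\ref{th:main1}, so I would instead interchange a second time, expressing them through products $H_{p-1}^{(m)}H_{p-1}^{(m')}$ and shorter harmonic sums; this is the source of the bare products $B_jB_{p-2a+b-j}$, and a second Faulhaber expansion of $H_{l-1}^{(a)}=H_l^{(a)}-l^{-a}$ produces the double binomial coefficients, including the generalized $\binom{b-a-j-1}{a-1}$ in the last line of each case.

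The main obstacle I anticipate is the bookkeeping in this final reduction: tracking which terms survive modulo $p^2$, correctly treating the exponents $c=b-a+1-t$ that turn negative, and reorganizing the resulting double Bernoulli sums so that the combinatorial coefficients collapse to the stated closed forms. The most delicate point is matching the leading $p^0$ coefficient, where the contributions of $S_{2a,b,1}$ and of $2T$ must be combined to produce $\frac{1-p}{b-a+1}\binom{b-a+1}{a}B_{b-2a+1}$ and its analogues in the other two branches; the case split $b+1\lessgtr 2a$ has to be carried through each reduction because it changes which Bernoulli indices land inside the admissible range $\max(2a,b)+3\le p$.
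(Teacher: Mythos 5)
Your overall skeleton coincides with the paper's: the identity $\bigl(H_i^{(a)}\bigr)^2=H_i^{(2a)}+2\sum_{1\le j<l\le i}(jl)^{-a}$ is just Proposition \ref{prop:2d} applied at level $i$ (the paper packages the same reduction through Proposition \ref{prop:a+b+c} with $a=c$), and after the Faulhaber expansion both arguments land on the same family of residual sums $\sum_{i=1}^{p-1}i^{\,c}H_i^{(a)}$ with $c=b-a+1-j$, split according to the sign of $c$ --- which is exactly where the trichotomy $b+1\lessgtr 2a$ comes from. For $c\ge 0$ your appeal to Theorem \ref{th:main1} (via Corollary \ref{col:lyginiai}) is what the paper does.

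There is, however, a genuine gap at the negative exponents. For $c<0$ you propose to ``interchange a second time, expressing them through products $H_{p-1}^{(m)}H_{p-1}^{(m')}$ and shorter harmonic sums.'' That interchange only reproduces Proposition \ref{prop:2d}: it yields the \emph{symmetric} combination $\sum_i i^{-m}H_i^{(a)}+\sum_i i^{-a}H_i^{(m)}=H_{p-1}^{(a)}H_{p-1}^{(m)}+H_{p-1}^{(a+m)}$, and no further elementary interchange separates the two summands. Modulo $p$ one can adjoin the relation coming from the substitution $i\mapsto p-i$ and solve the resulting linear system, but modulo $p^2$ that substitution produces lower-order corrections involving the very double harmonic sums one is trying to evaluate. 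The individual value of $S_{a,-m,1}$ modulo $p^2$ is precisely Zhao's theorem (\ref{eq:Zhao}), a nontrivial external input that the paper invokes at exactly this point; it is also the source of the coefficients $k_{a,\cdot}$ which, after a chain of binomial identities, collapse to the factors $\frac{1}{a(a+1)}\binom{b-a-j-1}{a-1}\binom{b}{j}$ in the last line of each case. Without citing (\ref{eq:Zhao}) or an equivalent mod-$p^2$ evaluation of double harmonic sums, your reduction cannot be closed.
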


The proofs of Theorems \ref{th:main1} and \ref{th:main2} are provided in Sections \ref{proof1} and \ref{proof2}.
\section{Auxiliary results}\label{auxi}
In this section we will form several auxiliary results about GHM sums and later about their congruence.
\subsection{Two and three dimensional identities involving sums of GHN}
\begin{proposition}\label{prop:2d}The following identity is true
\begin{align}\label{eq:a+b}	
		\sum_{i=1}^{n}i^{-a}H_i^{(b)}+\sum_{i=1}^{n}i^{-b}H_i^{(a)}&=H_{n}^{(a)}H_{n}^{(b)}+H_{n}^{(a+b)},
	\end{align}
	where $a,b\in\mathbb R$.
\end{proposition}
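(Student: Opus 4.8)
The plan is to prove the identity by expanding each generalized harmonic number as a sum and then reorganizing the resulting double sum according to the relative order of its two indices. Writing $H_i^{(b)}=\sum_{k=1}^{i}k^{-b}$ and interchanging the order of summation, I would rewrite the first sum as a sum over index pairs,
\[
\sum_{i=1}^{n}i^{-a}H_i^{(b)}=\sum_{1\le k\le i\le n}i^{-a}k^{-b},
\]
and, after relabelling the summation indices $i\leftrightarrow k$ in the second sum, obtain
\[
\sum_{i=1}^{n}i^{-b}H_i^{(a)}=\sum_{1\le i\le k\le n}i^{-a}k^{-b}.
\]

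Next I would add these two expressions. The pairs with $k<i$ are contributed once by the first sum, the pairs with $i<k$ are contributed once by the second sum, and the diagonal pairs $i=k$ are counted in both. Hence the left-hand side equals the sum of $i^{-a}k^{-b}$ over all pairs $(i,k)$ with $1\le i,k\le n$, plus one extra copy of the diagonal. The unrestricted double sum factors as $\bigl(\sum_{i}i^{-a}\bigr)\bigl(\sum_{k}k^{-b}\bigr)=H_n^{(a)}H_n^{(b)}$, while the extra diagonal contributes $\sum_{i=1}^{n}i^{-a}i^{-b}=H_n^{(a+b)}$, which together give the claimed right-hand side.

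The only delicate point is the bookkeeping of the diagonal: one must confirm that the terms $i=k$ occur in \emph{both} rewritten sums, so that after combining they appear with total multiplicity two, producing exactly the $H_n^{(a+b)}$ term rather than being absorbed into the product $H_n^{(a)}H_n^{(b)}$. Everything else is a routine interchange of finite sums, valid for arbitrary real exponents $a,b$, since $n$ is finite and no convergence questions arise.

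As an alternative I note that the identity also follows by a short induction on $n$: the base case $n=1$ gives $2$ on both sides, and in the inductive step the increment of the right-hand side, computed through $H_n^{(r)}=H_{n-1}^{(r)}+n^{-r}$, expands to $n^{-a}H_{n-1}^{(b)}+n^{-b}H_{n-1}^{(a)}+2n^{-(a+b)}$, which matches the increment $n^{-a}H_n^{(b)}+n^{-b}H_n^{(a)}$ of the left-hand side after the same substitution. I would nonetheless favour the double-sum argument, since it makes the structural origin of the $H_n^{(a+b)}$ term transparent.
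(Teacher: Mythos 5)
Your double-sum argument is correct and is essentially the paper's own proof: the paper likewise splits $H_n^{(a)}H_n^{(b)}=\sum_{k,l}k^{-a}l^{-b}$ into the regions $k\geq l$ and $k\leq l$ and subtracts the doubly counted diagonal, which is the same bookkeeping you perform from the other side of the identity. The inductive alternative you sketch is also valid but adds nothing beyond the combinatorial decomposition.
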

\begin{proof}
	Note that
	\begin{align*}			
		H_{n}^{(a)}H_{n}^{(b)}&=\sum_{\substack{k,l=1}}^{n}k^{-a}l^{-b}=\sum_{\substack{k,l=1 \\ k\geq l}}^{n}k^{-a}l^{-b}+\sum_{\substack{k,l=1 \\ k\leq l}}^{n}k^{-a}l^{-b}-\sum_{\substack{k,l=1 \\ k=l}}^{n}k^{-a}l^{-b}=\\&=\sum_{\substack{i=1}}^{n}i^{-a}H_i^{(b)}+\sum_{\substack{i=1}}^{n}i^{-b}H_i^{(a)}-H_n^{(a+b)}.
	\end{align*}	
\end{proof}

Equation (\ref{eq:a+b}) and its more specific counterpart
\begin{equation}
	\sum_{i=1}^{n}i^{-a}H_i^{(a)}=\dfrac12\left(\left(H_{n}^{(a)}\right)^2+H_{n}^{(2a)}\right)
\end{equation}
are the more general case formulas for the identity presented by Alzer et al. [\cite{Alzer},Eq.(3.62)]
\begin{align}			
		\sum_{i=1}^{n}\dfrac{H_i}{i}&=\dfrac{1}{2}\left(H^2_{n}+H_{n}^{(2)}\right).
\end{align}
Furthermore, taking $b=0$ gives us a generalization
\begin{align}\label{eq:b=0}
		\sum_{i=1}^{n}H_i^{(a)}=(n+1)H_{n}^{(a)}-H_{n}^{(a-1)}
\end{align} 
for another well known formula
\begin{align}			
		\sum_{i=1}^{n}H_i=(n+1)H_{n}-n.
\end{align} 

By using Eq.\ref{eq:a+b} in conjunction with the following elementary identity 
$$H_{n+1}^{(a)}=H_{n}^{(a)}+(n+1)^{-a},$$
we obtain
\begin{align}\label{eq:i+1}	
		\sum_{i=1}^{n}(i+1)^{-a}H_i^{(b)}+\sum_{i=1}^{n}(i+1)^{-b}H_i^{(a)}&=H_{n+1}^{(a)}H_{n+1}^{(b)}-H_{n+1}^{(a+b)},
\end{align}
which is generalized case of equations presented by Choi and Srivastava [\cite{Choi},Eq.(1.31)]
\begin{align}
		\sum_{i=1}^{n}\dfrac{H_i}{i+1}&=\dfrac12\left(H^2_{n+1}-H_{n+1}^{(2)}\right).
\end{align}
For the negative integer coefficients of GHN, Faulhaber's formula can be applied
\begin{align*}	
	H_{n}^{(-a)}=\dfrac{1}{a+1}\sum_{j=0}^{a}\binom{a+1}{j}(-1)^jB_jn^{a+1-j}.
\end{align*}
Using this fact in conjunction with (\ref{eq:a+b}) we obtain that, for example
\begin{equation}\label{eq:b1}
	2\sum_{i=1}^{n}iH_i^{(a)}=n(n+1)H_{n}^{(a)}+H_n^{(a-1)}-H_n^{(a-2)}.
\end{equation}	
\begin{proposition}\label{prop:a+b+c}The following identity is true
\begin{gather}
\begin{aligned}\label{eq:a+b+c}	
		H_{n}^{(a)}H_{n}^{(b)}H_{n}^{(c)}-H_{n}^{(a+b+c)}=&\sum_{i=1}^{n}i^{-a}H_i^{(b)}H_i^{(c)}+\sum_{i=1}^{n}i^{-b}H_i^{(a)}H_i^{(c)}+\sum_{i=1}^{n}i^{-c}H_i^{(a)}H_i^{(b)}\\&-\left(\sum_{i=1}^{n}i^{-a-b}H_i^{(c)}+\sum_{i=1}^{n}i^{-b-c}H_i^{(a)}+\sum_{i=1}^{n}i^{-a-c}H_i^{(b)}\right),
\end{aligned}
\end{gather}
	where $a,b,c\in\mathbb R$.
\end{proposition}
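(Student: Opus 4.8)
The plan is to mirror the proof of Proposition~\ref{prop:2d}, replacing the two-dimensional square $\{1,\dots,n\}^2$ with the cube $\{1,\dots,n\}^3$ and partitioning it according to which coordinate carries the largest index. Concretely, I would start from the expansion
\[
	H_{n}^{(a)}H_{n}^{(b)}H_{n}^{(c)}=\sum_{j,k,l=1}^{n}j^{-a}k^{-b}l^{-c},
\]
and cover the index cube by the three regions $A=\{j\ge k,\ j\ge l\}$, $B=\{k\ge j,\ k\ge l\}$ and $C=\{l\ge j,\ l\ge k\}$, each of which records the event that a particular coordinate attains the maximum. Since every triple has at least one maximal coordinate, we have $A\cup B\cup C=\{1,\dots,n\}^3$, so the whole sum can be recovered from these pieces by inclusion--exclusion.

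The key computations are then the evaluation of each region and each intersection as a one-dimensional sum over the common maximal index $i$. For a single region, summing the two non-maximal coordinates up to $i$ turns the inner factors into generalized harmonic numbers, giving $\sum_{i}i^{-a}H_{i}^{(b)}H_{i}^{(c)}$ for $A$ and the two analogues for $B$ and $C$. For a pairwise intersection such as $A\cap B$, the defining inequalities force $j=k$ (both maximal) with the remaining coordinate bounded by $i$, which collapses the pair into $\sum_{i}i^{-a-b}H_{i}^{(c)}$, and similarly for $B\cap C$ and $A\cap C$. Finally the triple intersection $A\cap B\cap C$ forces $j=k=l=i$ and contributes exactly $H_{n}^{(a+b+c)}$.

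Assembling these via the inclusion--exclusion formula $\Sigma_{A\cup B\cup C}=\Sigma_{A}+\Sigma_{B}+\Sigma_{C}-\Sigma_{A\cap B}-\Sigma_{B\cap C}-\Sigma_{A\cap C}+\Sigma_{A\cap B\cap C}$ reproduces the three positive single-sum terms, the three subtracted double-index sums, and a leftover $+H_{n}^{(a+b+c)}$; moving this last term to the left-hand side yields Eq.~(\ref{eq:a+b+c}). The only real point requiring care---the main obstacle, such as it is---is the bookkeeping of overlaps along the diagonals where indices coincide: one must define $A,B,C$ with non-strict inequalities so that ties are counted consistently, and then verify that the inclusion--exclusion signs account for each diagonal exactly once, so that no boundary term is double-counted or dropped.
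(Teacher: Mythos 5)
Your proof is correct and follows exactly the route the paper intends: the paper gives no explicit argument for Proposition~\ref{prop:a+b+c}, stating only that it is a ``straightforward three dimensional generalisation'' of Proposition~\ref{prop:2d}, whose proof is precisely the max-coordinate decomposition with inclusion--exclusion that you carry out on the cube. Your identification of the pairwise intersections with the diagonal sums $\sum_i i^{-a-b}H_i^{(c)}$ (etc.) and of the triple intersection with $H_n^{(a+b+c)}$ is the correct bookkeeping, so the argument is complete.
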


This is a straightforward three dimensional generalisation of Proposition \ref{eq:a+b}.
It has to be mentioned that similar identity (only investigating multiple harmonic sums) was achieved by Kehila \cite{Kehila}.
\subsection{General case congruences}
We put that
$$\dfrac mn\equiv \dfrac rs \pmod{p}\Longleftrightarrow ms\equiv nr\pmod{p},$$
where $n,s$ are not divisible by $p$.

In 1900, Glaisher \cite{Glaisher} (see also Eq.22\cite{Mestrovic}) proved that:
\begin{lemma}\label{lema:dalumasBernulio}
Let $m$ be a positive integer,
and let $p$ be a prime such that $p\geq m+3$. Then
\begin{equation*}
H_{p-1}^{(m)}\equiv
\begin{cases}
\dfrac{m}{m+1}pB_{p-m-1}\pmod{p^2}\quad\text{if $m$ is even,}\\
\dfrac{m(m+1)}{2(m+2)}p^2B_{p-m-2}\pmod{p^3}\quad\text{if $m$ is odd.}
\end{cases}
\end{equation*}
\end{lemma}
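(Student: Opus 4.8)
The plan is to treat the two parities of $m$ by different but linked routes, taking the even case as the foundation and then bootstrapping the odd case from it through the reflection $k\mapsto p-k$. The even case is proved by trading inverse powers for genuine power sums modulo $p^2$ and evaluating those with Faulhaber's formula; the odd case is then almost immediate.

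For the even case I would first eliminate the negative exponent. Writing $k^{p-1}=1+pq_k$ with $q_k\in\mathbb{Z}$ the Fermat quotient gives $k^{-(p-1)}\equiv 1-pq_k\pmod{p^2}$, and since $pq_k\,k^{p-1-m}=k^{2(p-1)-m}-k^{p-1-m}$ is an exact integer identity, summation yields
\begin{equation*}
H_{p-1}^{(m)}=\sum_{k=1}^{p-1}k^{-m}\equiv 2\sum_{k=1}^{p-1}k^{p-1-m}-\sum_{k=1}^{p-1}k^{2(p-1)-m}\pmod{p^2}.
\end{equation*}
Each power sum is then evaluated by Faulhaber's formula, retaining only the terms whose attached power of $p$ survives modulo $p^2$, which gives $pB_{p-1-m}$ and $pB_{2(p-1)-m}$ respectively. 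Kummer's congruence relates these Bernoulli numbers (both indices lie in the residue class $-m$ modulo $p-1$), yielding $B_{2(p-1)-m}\equiv\frac{m+2}{m+1}B_{p-1-m}\pmod p$, and collecting the coefficients $2-\frac{m+2}{m+1}=\frac{m}{m+1}$ produces $\frac{m}{m+1}pB_{p-m-1}$.

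The odd case I would then deduce from the even case through the $p$-adic reflection identity obtained by expanding $(p-k)^{-m}=(-1)^m\sum_{i\ge0}\binom{m+i-1}{i}p^i k^{-m-i}$ and summing over the bijection $k\mapsto p-k$; for odd $m$ this reads
\begin{equation*}
2H_{p-1}^{(m)}\equiv-\,mp\,H_{p-1}^{(m+1)}-\binom{m+1}{2}p^{2}H_{p-1}^{(m+2)}\pmod{p^{3}},
\end{equation*}
with all terms of order $p^3$ or higher already dropped. Reading the same identity modulo $p^2$ and using that every even-index $H_{p-1}^{(m+1)}$ is divisible by $p$ (from the even case) shows that every odd-index $H_{p-1}^{(\mathrm{odd})}$ is divisible by $p^2$; this annihilates the second term above modulo $p^3$. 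Substituting the even-case value $H_{p-1}^{(m+1)}\equiv\frac{m+1}{m+2}pB_{p-m-2}\pmod{p^2}$ into the first term then leaves exactly the stated multiple of $p^2B_{p-m-2}$.

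The main obstacle is the single power sum $\sum_{k=1}^{p-1}k^{2(p-1)-m}$ arising in the even case: its exponent $2(p-1)-m$ exceeds $p-1$, so Faulhaber's expansion necessarily contains the term carrying $B_{p-1}$, whose denominator is divisible by $p$ by von Staudt--Clausen. The delicate point is that this term also carries the factor $p^{\,(2(p-1)-m)+1-(p-1)}=p^{\,p-m}$, so after the single $p$ in the denominator of $B_{p-1}$ is cancelled its $p$-adic valuation is at least $p-m-1$, which is $\ge 2$ precisely because $m\le p-3$; only then may one discard it and legitimately write $\sum_{k=1}^{p-1}k^{2(p-1)-m}\equiv pB_{2(p-1)-m}\pmod{p^2}$ before applying Kummer. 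The hypothesis $p\ge m+3$ is thus exactly what both annihilates this parasitic term and keeps $p$-integral every Bernoulli number that is meant to survive.
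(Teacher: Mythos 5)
The paper does not prove this lemma at all --- it is quoted as Glaisher's 1900 result (via M\v{e}strovi\'c, Eq.~22) --- so your argument is necessarily a new route rather than a variant of the paper's. Your even case is correct and complete: the reduction $k^{-m}\equiv 2k^{p-1-m}-k^{2(p-1)-m}\pmod{p^2}$, the Faulhaber evaluation, the careful disposal of the $B_{p-1}$ term using $p\ge m+3$, and the Kummer step $2-\tfrac{m+2}{m+1}=\tfrac{m}{m+1}$ all check out (the $p$-integrality of every surviving Bernoulli number and the invertibility of $N+1$ mod $p$ also hold under the stated hypothesis). The reflection identity you use for the odd case is likewise correct.

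Two points in the odd case need attention. First, the sign: carrying out your own substitution gives $2H_{p-1}^{(m)}\equiv -mp\,H_{p-1}^{(m+1)}\equiv -\tfrac{m(m+1)}{m+2}p^2B_{p-m-2}\pmod{p^3}$, hence $H_{p-1}^{(m)}\equiv -\tfrac{m(m+1)}{2(m+2)}p^2B_{p-m-2}$, \emph{not} the positive multiple you announce. For $m=1$ this is the classical $H_{p-1}\equiv-\tfrac{p^2}{3}B_{p-3}\pmod{p^3}$, so your derivation is the correct one and the ``$+$'' in the statement as printed in the paper is a sign error inherited from transcription; you should say explicitly that your computation produces the minus sign rather than asserting it ``leaves exactly the stated multiple.'' Second, your claim that every odd-index $H_{p-1}^{(\mathrm{odd})}$ is divisible by $p^2$ is an overreach: establishing $p^2\mid H_{p-1}^{(m+2)}$ by your own bootstrap would require the even case at index $m+3$, i.e.\ $p\ge m+6$, and it genuinely fails otherwise (e.g.\ $H_4^{(3)}\equiv 20\pmod{25}$ for $p=5$). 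Fortunately you only need $p\mid H_{p-1}^{(m+2)}$ to kill the $p^2H_{p-1}^{(m+2)}$ term modulo $p^3$, and that follows for every odd index directly from the pairing $k\leftrightarrow p-k$ modulo $p$ (or from your reflection identity read modulo $p$); state it that way and the argument is airtight. Note also that the even case is applied at index $m+1$, which needs $p\ge m+4$; this is automatic since $m+3$ is even and $p$ is odd, but it deserves a sentence.
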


Proposition \ref{prop:a+b+c} (case $a=b=c$), when $p\geq3a+3$, in conjunction with Glaisher's lemma generalizes the congruence by Tauraso \cite{Tauraso} mentioned earlier in the paper:
\begin{equation}\label{eq:hyp}
		\sum_{i=1}^{p-1}i^{-a}\left(H_i^{(a)}\right)^2\equiv\sum_{i=1}^{p-1}i^{-2a}H_i^{(a)}\equiv-\sum_{i=1}^{p-1}i^{-a}H_i^{(2a)}
\begin{cases}
\pmod{p}\quad\text{if $a$ is even,}\\
\pmod{p^2}\quad\text{if $a$ is odd.}
\end{cases}
\end{equation}	
Furthermore combining other cases of Proposition \ref{prop:a+b+c} ($c=0$; $a=b,\,c=0$) and (\ref{eq:Zhao}) we get several new generalizations:
\begin{equation}\label{eq:a2}
		\sum_{i=1}^{p-1}\left(H_i^{(a)}\right)^2\equiv-2\sum_{i=1}^{p-1}i^{-a+1}H_i^{(a)}\pmod{p^2},
\end{equation}
when $p\geq2a+2$;
\begin{equation}
		\sum_{i=1}^{p-1}H_i^{(a)}H_i^{(b)}\equiv-\dfrac{k_{a,b}}{2(a+b)}pB_{p-a-b}\pmod{p^2},
\end{equation}
when  $a$ and $b$ are of the opposite parity and $p\geq a+b+2$. These generalise results given by Sun \cite{Sun} for particular values of $a,b,k$.
\section{Proof of main results}
For the proof of Theorems \ref{th:main1} and \ref{th:main2} we are going to need the following results.
\begin{theorem}(Von Staudt–Clausen)
If $n$ is a positive integer then
$$B_{2n}+\sum_{(p-1)|2n}\dfrac 1p\in\mathbb{Z}.$$
\end{theorem}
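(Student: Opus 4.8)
\emph{Proof strategy.} Since the claim is purely about the denominator of $B_{2n}$, the plan is to fix a prime $p$ and prove the equivalent local statement: $v_p(B_{2n})\ge 0$ when $(p-1)\nmid 2n$, while $pB_{2n}\equiv -1\pmod p$ (so that $v_p(B_{2n})=-1$ with fractional part $-1/p$) when $(p-1)\mid 2n$. A rational number that is $p$-integral at every prime is an integer, so once each prime is handled, subtracting the fractional parts $-1/p$ over the primes with $(p-1)\mid 2n$ leaves exactly the integer in the statement. Throughout I would work $p$-adically and use that $B_0=1$, $B_1=-1/2$, and $B_k=0$ for odd $k\ge 3$, so that among the lower-index Bernoulli numbers only the even ones can carry a pole.

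The engine of the argument is the elementary power-sum congruence: for every $m\ge 1$,
\[
\sum_{j=1}^{p-1} j^m \equiv
\begin{cases}
-1 \pmod p, & (p-1)\mid m,\\
0 \pmod p, & (p-1)\nmid m.
\end{cases}
\]
The first case follows from Fermat's little theorem, since each $j^m\equiv 1$ and there are $p-1$ terms; the second follows by summing the geometric progression $\sum_i g^{im}$ over a primitive root $g$ modulo $p$, where $g^m\not\equiv 1$. To connect this with $B_{2n}$ I would apply Faulhaber's formula (the identity for $H_n^{(-a)}$ recorded above, taken at $a=2n$ and argument $p$, discarding the harmless term $p^{2n}$), which isolates the leading Bernoulli contribution:
\[
\sum_{j=1}^{p-1} j^{2n} = p\,B_{2n} + \frac{1}{2n+1}\sum_{k=0}^{2n-1}\binom{2n+1}{k} B_k\, p^{\,2n+1-k}.
\]

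The main step is then a strong induction on $n$: assuming the theorem (hence $v_p(B_k)\ge -1$ and squarefree denominators) for all even $k<2n$, I claim every term of the tail sum has $v_p\ge 1$, so that $p\,B_{2n}\equiv\sum_{j=1}^{p-1}j^{2n}\pmod p$ and the power-sum congruence delivers precisely the residue required above. Rewriting $\frac{1}{2n+1}\binom{2n+1}{k}=\frac{1}{2n+1-k}\binom{2n}{k}$, the $k$-th tail term has valuation $(2n+1-k)-v_p(2n+1-k)+v_p\binom{2n}{k}+v_p(B_k)$, and the whole difficulty is showing this is $\ge 1$ in the borderline cases where $2n+1-k$ and $p$ are both small.

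The hard part is exactly this $p$-adic bookkeeping, and I expect it to be resolved by the vanishing of the odd Bernoulli numbers. For even $k$ with $2\le k\le 2n-2$ the quantity $d=2n+1-k$ is odd and $\ge 3$, so $d-v_p(d)\ge 2$ (the worst case $d=3,\ p=3$ giving equality), and the term has valuation $\ge 2+0+(-1)=1$. The term $k=0$ has $v_p(B_0)=0$ and valuation $\ge 2$. The term $k=1$ is saved by the cancellation $v_p\binom{2n}{1}=v_p(2n)=v_p(2n+1-1)$, leaving valuation $2n+v_p(B_1)\ge 2n-1\ge 1$; all remaining $k$ are odd $\ge 3$, where $B_k=0$. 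Assembling the per-prime conclusions over all primes then yields $B_{2n}+\sum_{(p-1)\mid 2n}1/p\in\mathbb{Z}$.
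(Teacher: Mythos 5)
The paper does not prove this theorem: von Staudt--Clausen is quoted as a classical result, and only its corollary (that the denominator of $B_{2n}$ is coprime to every prime $p\geq 2n+3$) is used later. So there is no in-paper proof to compare against; your proposal has to stand on its own, and it does. Your argument is the standard proof: reduce to the local statement at each prime ($v_p(B_{2n})\geq 0$ if $(p-1)\nmid 2n$, and $pB_{2n}\equiv -1\pmod p$ if $(p-1)\mid 2n$), feed the power-sum congruence $\sum_{j=1}^{p-1}j^{2n}\equiv -1$ or $0\pmod p$ into the Faulhaber expansion, and run a strong induction using $\tfrac{1}{2n+1}\binom{2n+1}{k}=\tfrac{1}{2n+1-k}\binom{2n}{k}$ to control the tail. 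The valuation bookkeeping checks out: for even $2\leq k\leq 2n-2$ the odd number $d=2n+1-k\geq 3$ satisfies $d-v_p(d)\geq 2$ (if $v_p(d)=e\geq 1$ then $d\geq 3^e\geq 2e+1$, and $d$ odd rules out $p=2$ contributing), the $k=1$ term has valuation $2n+v_p(B_1)\geq 2n-1\geq 1$, odd $k\geq 3$ give $B_k=0$, and the inductive hypothesis supplies $v_p(B_k)\geq -1$ for the remaining even $k$; gluing the local conclusions over all primes yields the integrality claim. It is worth observing that your engine --- Faulhaber's formula combined with the residues of $\sum_{j=1}^{p-1}j^{m}$ modulo $p$ --- is exactly the machinery the paper itself deploys elsewhere (its lemma that $H^{(-a)}_{p-1}\equiv pB_a\pmod{p^2}$ and its lemma on $H^{(-m)}_{p-1}\bmod p$), so your proof makes the paper's appeal to von Staudt--Clausen self-contained using tools already present in the text.
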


As a consequence of this theorem we have that the denominator of $B_{2n}$ is not divisible by $p\geq 2n+3$.
\begin{lemma}\label{lema:help1}
	Assume that $4\leq a+3\leq p$. Then
	$$H^{(-a)}_{p-1}\equiv pB_a\pmod{p^2}.$$
\end{lemma}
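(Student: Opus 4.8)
The plan is to start from Faulhaber's formula, which the excerpt has already recorded, namely
\[ H_{n}^{(-a)}=\frac{1}{a+1}\sum_{j=0}^{a}\binom{a+1}{j}(-1)^jB_jn^{a+1-j}, \]
and specialise it to $n=p-1$, recalling that $H_{p-1}^{(-a)}=\sum_{k=1}^{p-1}k^{a}$. Before any reduction modulo $p^2$ I would invoke the Von Staudt–Clausen theorem together with the hypothesis $a+3\le p$: for even $j$ with $2\le j\le a$ we have $j\le p-3<p-1$, hence $(p-1)\nmid j$, so $p$ does not divide the denominator of $B_j$. Thus every $B_j$ with $0\le j\le a$ is $p$-integral and the congruence is meaningful. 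The same bound gives $a+1\le p-2$, so $p\nmid(a+1)$ and $\tfrac{1}{a+1}$ is a well-defined unit modulo $p^2$.

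Next I would expand the power of $p-1$ modulo $p^2$. Writing $m=a+1-j$, the binomial theorem gives $(p-1)^m=(-1)^m(1-p)^m\equiv(-1)^m\bigl(1-mp\bigr)\pmod{p^2}$. Substituting this and using $(-1)^{j}(-1)^{a+1-j}=(-1)^{a+1}$ collapses the formula to
\[ H_{p-1}^{(-a)}\equiv\frac{(-1)^{a+1}}{a+1}\sum_{j=0}^{a}\binom{a+1}{j}B_j\bigl(1-(a+1-j)p\bigr)\pmod{p^2}, \]
at which point I would separate the constant contribution from the $p$-linear one.

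The constant part is $\frac{(-1)^{a+1}}{a+1}\sum_{j=0}^{a}\binom{a+1}{j}B_j$, which vanishes for every $a\ge1$ by the defining recurrence $\sum_{j=0}^{a}\binom{a+1}{j}B_j=0$ of the Bernoulli numbers. For the $p$-linear part I would apply the elementary factorisation $\binom{a+1}{j}(a+1-j)=(a+1)\binom{a}{j}$ to rewrite the inner sum as $\sum_{j=0}^{a}\binom{a+1}{j}B_j(a+1-j)=(a+1)\sum_{j=0}^{a}\binom{a}{j}B_j$, and then evaluate the remainder via the Bernoulli polynomial value $B_a(1)=\sum_{j=0}^{a}\binom{a}{j}B_j=(-1)^aB_a$. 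Collecting signs, $(-1)^{a+1}\cdot(-1)^{a}=-1$, cancels the leading factor and leaves exactly $pB_a$, which is the claim.

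The only genuinely delicate point is the one I would dispatch first: the whole passage to congruences modulo $p^2$ is legitimate only because $a+3\le p$ forces $p$-integrality of all $B_j$ with $j\le a$ through Von Staudt–Clausen and makes $a+1$ invertible. Once that is secured, the remaining work is routine but sign-sensitive bookkeeping around the two combinatorial identities; the step most likely to conceal an error is the tracking of the powers of $-1$ through the expansion of $(p-1)^{a+1-j}$ and the reflection identity $B_a(1)=(-1)^aB_a$, so I would confirm the outcome against the small cases $a=1$, where $\sum_{k=1}^{p-1}k=\tfrac{p^2-p}{2}\equiv-\tfrac{p}{2}=pB_1\pmod{p^2}$, and $a=2$.
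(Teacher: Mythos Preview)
Your proof is correct and follows essentially the same route as the paper's: both start from Faulhaber's formula, expand $(p-1)^{a+1-j}$ binomially modulo $p^2$, kill the constant part via the Bernoulli recurrence $\sum_{j=0}^{a}\binom{a+1}{j}B_j=0$, and reduce the $p$-linear part using $\binom{a+1}{j}(a+1-j)=(a+1)\binom{a}{j}$ together with $\sum_{j=0}^{a}\binom{a}{j}B_j=(-1)^aB_a$. If anything you are more explicit than the paper, which leaves the last identity and the Von Staudt--Clausen $p$-integrality check implicit.
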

\begin{proof}
	From Faulhaber's and Newton's binomial formulas follows:
	\begin{align*}
		H^{(-a)}_{p-1}&=\dfrac{1}{a+1}\sum_{j=0}^a\binom{a+1}{j}(-1)^jB_j\sum_{k=0}^{a+1-j}\binom{a+1-j}{k}p^k(-1)^{a+1-j-k}\\&\equiv\dfrac{(-1)^a}{a+1}\sum_{j=0}^a\binom{a+1}{j}B_j\left[\left(a+1-j\right)p-1\right]\\&=(-1)^ap\sum_{j=0}^a\binom{a}{j}B_j-\dfrac{(-1)^a}{a+1}\delta_{a,0}\pmod{p^2},
	\end{align*}
	where $\delta_{a,0}$ is Kronecker delta.
\end{proof}

Immediately, we see, that due to properties of Bernoulli numbers $H^{(-a)}_{p-1}$ is divisible by $p^2$, when $a>1$ is odd. 
\subsection{Proof of Theorem 1}\label{proof1}
We begin by deducing from (\ref{eq:a+b}) that
\begin{align*}
	\sum_{i=1}^{p-1}i^{b}H_i^{(a)}&=H_{p-1}^{(-b)}H_{p-1}^{(a)}+H_{p-1}^{(a-b)}-\sum_{i=1}^{p-1}i^{-a}\sum_{j=1}^ij^b.
\end{align*}
It is well known that:
\begin{lemma}\label{lema:minusdalumas}
	If $m\in\mathbb{N}$ and $p$ is prime, then
\begin{equation*}
H_{p-1}^{(-m)}\equiv
\begin{cases}
-1 \pmod{p}\quad\text{if $p-1\mid m$,}\\
0 \pmod{p}\quad\text{if $p-1\nmid m$.}\\
\end{cases}
\end{equation*}
\end{lemma}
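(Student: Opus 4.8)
The plan is to observe that, by the definition of the generalized harmonic numbers, $H_{p-1}^{(-m)} = \sum_{k=1}^{p-1} k^{-(-m)} = \sum_{k=1}^{p-1} k^m$, so the lemma reduces to evaluating the power sum $\sum_{k=1}^{p-1} k^m$ modulo $p$. The cleanest route is through a primitive root. First I would fix a primitive root $g$ modulo $p$; then, as $k$ runs through $1, 2, \ldots, p-1$, the residues $k \bmod p$ form a permutation of $g^0, g^1, \ldots, g^{p-2}$. This turns the power sum into a geometric series:
$$\sum_{k=1}^{p-1} k^m \equiv \sum_{j=0}^{p-2} \left(g^m\right)^j \pmod p.$$

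Next I would split into two cases according to the value of $g^m \bmod p$. Since $g$ has multiplicative order $p-1$, we have $g^m \equiv 1 \pmod p$ if and only if $(p-1) \mid m$. In the case $(p-1) \mid m$ every summand $\left(g^m\right)^j$ equals $1$, so the sum equals $p-1 \equiv -1 \pmod p$, giving the first branch. In the case $(p-1) \nmid m$ the quantity $g^m - 1$ is a unit modulo $p$, so the geometric-series formula applies and yields
$$\sum_{j=0}^{p-2} \left(g^m\right)^j \equiv \frac{\left(g^m\right)^{p-1} - 1}{g^m - 1} \pmod p.$$
Since $\left(g^m\right)^{p-1} = \left(g^{p-1}\right)^m \equiv 1 \pmod p$ by Fermat's little theorem, the numerator vanishes, so the sum is $\equiv 0 \pmod p$, giving the second branch.

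This argument is essentially routine, and I do not anticipate a genuine obstacle. The only points requiring care are the existence of a primitive root (which holds for every prime $p$) and, in the second case, the justification that $g^m - 1$ is invertible modulo $p$ — which is guaranteed precisely by the hypothesis $(p-1) \nmid m$, so that the geometric-series formula is valid there. Combining the two cases yields the stated congruence. An alternative derivation via Faulhaber's formula and the Von Staudt–Clausen theorem is possible, but it is more involved than necessary, so I would prefer the primitive-root computation.
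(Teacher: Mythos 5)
Your proof is correct and complete: the reduction of $H_{p-1}^{(-m)}$ to the power sum $\sum_{k=1}^{p-1}k^m$, the reindexing via a primitive root, and the two-case geometric-series evaluation are all valid, with the invertibility of $g^m-1$ properly justified exactly when $p-1\nmid m$. The paper itself offers no proof of this lemma (it is simply cited as well known), and the primitive-root computation you give is the standard argument one would supply, so there is nothing to reconcile.
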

Hence, by Faulhaber's formula and Lemmas \ref{lema:dalumasBernulio} and \ref{lema:minusdalumas} we get
\begin{gather*}
\begin{aligned}\label{eq:main10}
	\sum_{i=1}^{p-1}i^{b}H_i^{(a)}&\equiv H_{p-1}^{(a-b)}-\dfrac{1}{b+1}\sum_{\substack{j=0}}^b\binom{b+1}{j}(-1)^jB_jH_{p-1}^{(a-b-1+j)}\\&=-\dfrac{1}{b+1}\sum_{\substack{j=0}}^b\binom{b+1}{j}B_jH_{p-1}^{(a-b-1+j)}\pmod{p^2}.
\end{aligned}
\end{gather*}
Now, we go case by case and see that, if $a\leq b$
\begin{gather*}\label{eq:main11}
\begin{aligned}
	\sum_{\substack{j=0}}^b\binom{b+1}{j}B_jH_{p-1}^{(a-b-1+j)}=&\left(\sum_{\substack{j=0}}^{b-a}+\sum_{\substack{j=b-a+1}}+\sum_{\substack{j=b-a+2\\a\geq2}}^{b}\right)\binom{b+1}{j}B_jH_{p-1}^{(a-b-1+j)}\\\equiv& p\sum_{\substack{j=0}}^{b-a}\binom{b+1}{j}B_{b-a+1-j}B_j+(p-1)\binom{b+1}{a}B_{b-a+1}\\&+p\sum_{\substack{j=b-a+2\\a\geq2}}^{b}\binom{b+1}{j}\dfrac{a-b-1+j}{a-b+j}B_jB_{p-a+b-j}\pmod{p^2},
\end{aligned}
\end{gather*}
if $a=b+1$
\begin{gather*}\label{eq:main12}
\begin{aligned}
	\sum_{\substack{j=0}}^b\binom{b+1}{j}B_jH_{p-1}^{(a-b-1+j)}&\equiv (p-1)+p\sum_{\substack{j=1}}^{a-1}\binom{a}{j}\dfrac{j}{j+1}B_jB_{p-j-1}\pmod{p^2},
\end{aligned}
\end{gather*}
if $a>b+1$
\begin{gather*}\label{eq:main13}
\begin{aligned}
	\sum_{\substack{j=0}}^b\binom{b+1}{j}B_jH_{p-1}^{(a-b-1+j)}&=p\sum_{\substack{j=0}}^{b}\binom{b+1}{j}\dfrac{a-b-1+j}{a-b+j}B_jB_{p-a+b-j}\pmod{p^2}.
\end{aligned}
\end{gather*}
this proves the theorem.

For the more explicit expression of Theorem \ref{th:main1} we will need the following congruence.

\begin{lemma}\label{lemma:kummer}(Kummer's congruence)
	Let $p$ be an odd prime and $b$ an even number such that $p-1$ does not divide $b$. Then
	$$\dfrac{B_{p+b-1}}{p+b-1}\equiv \dfrac{B_b}{b}\pmod{p}.$$
\end{lemma}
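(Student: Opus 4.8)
The plan is to recognize this as the first-order Kummer congruence. Since $p\equiv 1\pmod{p-1}$ we have $p+b-1\equiv b\pmod{p-1}$, and because $b$ is even with $(p-1)\nmid b$, both indices $b$ and $p+b-1$ are even and not divisible by $p-1$; hence $B_b$ and $B_{p+b-1}$ are $p$-integral by von Staudt–Clausen, while the denominators $b$ and $p+b-1\equiv b-1\pmod p$ are units modulo $p$, so the statement is well posed. My strategy is to route everything through the power sums $\sigma_m:=\sum_{k=1}^{p-1}k^m$ and to show that $\tfrac{B_m}{m}\bmod p$ depends only on $m\bmod(p-1)$.

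First I would upgrade Lemma \ref{lema:help1} from the range $m\le p-3$ to the single extra index $m=p+b-1$, establishing $\sigma_m\equiv pB_m\pmod{p^2}$ for both $m=b$ and $m=p+b-1$. For $m=b$ this is exactly Lemma \ref{lema:help1}. For $m=p+b-1$ I would expand $\sigma_m$ by Faulhaber's formula and estimate the $p$-adic valuation of each term $\tfrac{1}{m+1}\binom{m+1}{j}B_j p^{m+1-j}$ (note $p\nmid m+1=p+b$): for $(p-1)\nmid j$ the Bernoulli factor is $p$-integral and the term dies modulo $p^2$ except for $j=m$, which reproduces $pB_m$. The only dangerous index is $j=p-1$, where $B_{p-1}$ carries the simple pole $-1/p$. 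The key computation is that $\binom{p+b}{p-1}=\binom{p+b}{b+1}$ is divisible by $p$ exactly once (a single carry occurs on adding $b+1$ and $p-1$ in base $p$), so this term has valuation $-1+1+(b+1)=b+1\ge 3$ and also vanishes modulo $p^2$. This confirms $\sigma_{p+b-1}\equiv pB_{p+b-1}\pmod{p^2}$.

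Consequently $\tfrac{B_m}{m}\equiv\tfrac{\sigma_m}{pm}\pmod p$ for both indices, and the lemma becomes equivalent to $\tfrac{\sigma_{p+b-1}}{p+b-1}\equiv\tfrac{\sigma_b}{b}\pmod{p^2}$. To supply the genuinely new input I would decompose each unit as $a=\omega(a)\langle a\rangle$, with $\omega(a)$ the Teichmüller representative ($\omega(a)^{p-1}=1$, $\omega(a)\equiv a$) and $\langle a\rangle=1+p\beta(a)\equiv 1\pmod p$, so that $\langle a\rangle^m\equiv 1+mp\beta(a)\pmod{p^2}$ with $\beta(a)$ independent of $m$. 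Using the exact orthogonality $\sum_{a=1}^{p-1}\omega(a)^m=0$ for $(p-1)\nmid m$, this yields $\sigma_m\equiv mp\sum_{a}\omega(a)^m\beta(a)\pmod{p^2}$, whence $\tfrac{B_m}{m}\equiv\sum_{a=1}^{p-1}\omega(a)^m\beta(a)\pmod p$. Since $\omega(a)^{p-1}=1$ exactly, $\omega(a)^{p+b-1}=\omega(a)^{b}$, so the right-hand side is literally unchanged when $m$ passes from $b$ to $p+b-1$, giving $\tfrac{B_{p+b-1}}{p+b-1}\equiv\tfrac{B_b}{b}\pmod p$.

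The main obstacle is the passage at the second $p$-adic digit: modulo $p$ both power sums vanish and Fermat's little theorem alone yields nothing, so all the content sits in the comparison modulo $p^2$. Concretely the difficulty is twofold — controlling the anomalous Faulhaber term at $j=p-1$, which is precisely where von Staudt–Clausen and the $p$-divisibility of $\binom{p+b}{b+1}$ become indispensable, and furnishing an independent link between the two power sums modulo $p^2$, which the Teichmüller/orthogonality argument provides; any purely formal manipulation of $\sigma_{p+b-1}$ and $\sigma_b$ turns out to be circular. Should one prefer to remain strictly within the paper's elementary toolkit, this final step may instead simply be cited as the classical Kummer congruence.
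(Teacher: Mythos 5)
The paper offers no proof of this lemma: it is stated with the tag ``(Kummer's congruence)'' and used as a classical black box, exactly as you anticipate in your closing remark. Your argument is therefore not a variant of the paper's proof but a self-contained substitute for the citation, and it is essentially correct. The two halves fit together properly: Faulhaber plus von Staudt--Clausen gives $\sigma_m\equiv pB_m\pmod{p^2}$ for $m=b$ (this is exactly the paper's Lemma~\ref{lema:help1}) and for $m=p+b-1$ (your carry count for $\binom{p+b}{b+1}$ is correct, though it is not even needed: the dangerous $j=p-1$ term already has valuation at least $-1+(b+1)=b\geq 2$ without the extra factor of $p$ from the binomial coefficient); and the Teichm\"uller decomposition $a=\omega(a)(1+p\beta(a))$ together with the exact vanishing of $\sum_a\omega(a)^m$ for $(p-1)\nmid m$ turns $B_m/m\equiv\sigma_m/(pm)$ into $\sum_a\omega(a)^m\beta(a)$, which visibly depends only on $m\bmod{(p-1)}$. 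Two small caveats. First, your divisions by $b$ and by $p+b-1\equiv b-1$ require $p\nmid b$ and $p\nmid b-1$; these hold in the range $3<b<p-1$ where the paper actually invokes the lemma, but not for the unrestricted ``even $b$'' of the statement (e.g.\ $p=5$, $b=6$ gives $p\mid p+b-1$), so your proof covers the lemma only as it is used, not as it is literally stated. Second, Lemma~\ref{lema:help1} is stated only for $b\leq p-3$, which again suffices because $b$ is even and $b<p-1$ forces $b\leq p-3$. What the explicit proof buys over the paper's bare citation is self-containedness; what it costs is the importation of Teichm\"uller representatives, which sit outside the elementary Bernoulli-number toolkit the paper otherwise confines itself to.
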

Now, using this congruence and properties of Bernoulli numbers we get two corollaries (depending on the parity of $a$ and $b$). 

\begin{corollary}\label{col:nelyginiai}
	For $a,b\in\mathbb{N}$ such that $a+b$ is odd, $a<b-1$ and a prime number $p:\max{(a,b)}+3\leq p$ the following congruence is true
	\begin{align*}
	\sum_{i=1}^{p-1}i^{b}H_i^{(a)}&\equiv \dfrac{1-p}{b+1}\binom{b+1}{a}B_{b-a+1}-\dfrac{p}{b+1}\sum_{\substack{j=0}}^{b}\binom{b+1}{j}B^\#_{a,b,j,p}\pmod{p^2}.
\end{align*}
\end{corollary}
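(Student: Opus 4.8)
The plan is to start from the case $0<a\le b$ of Theorem \ref{th:main1} --- which applies since the hypothesis $a<b-1$ forces $a<b$ --- and to collapse its two sums into the single $B^\#$-sum appearing in the corollary. Writing $c:=b-a+1$, Theorem \ref{th:main1} gives
$$\sum_{i=1}^{p-1}i^bH_i^{(a)}\equiv\frac{1}{b+1}\binom{b+1}{a}B_{b-a+1}-\frac{p}{b+1}\sum_{j=0}^{c}\binom{b+1}{j}B_{c-j}B_j-\frac{p}{b+1}\sum_{\substack{j=c+1\\a\ge2}}^{b}\binom{b+1}{j}B^\#_{a,b,j,p}\pmod{p^2},$$
while the target differs only in that an extra factor $-p$ is attached to the leading term and the two partial $B^\#$-type sums are merged into $\sum_{j=0}^{b}\binom{b+1}{j}B^\#_{a,b,j,p}$. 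Since the tail $\sum_{j=c+1}^{b}$ is common to both expressions (all indices there are $<p-1$, hence $p$-integral by Von Staudt--Clausen), it suffices to prove the mod-$p$ identity
$$\binom{b+1}{a}B_{b-a+1}+\sum_{j=0}^{c}\binom{b+1}{j}B_{c-j}B_j\equiv\sum_{j=0}^{c}\binom{b+1}{j}B^\#_{a,b,j,p}\pmod{p},$$
after which multiplying by $-p/(b+1)$ and reassembling yields the claim.

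The key step is to rewrite each $B^\#_{a,b,j,p}$ via Kummer's congruence (Lemma \ref{lemma:kummer}). For even $j$ with $0\le j\le c-2$ the index $n:=c-j$ is even and satisfies $0<n<p-1$ (because $c\le b\le p-3$), so $p-1\nmid n$ and Lemma \ref{lemma:kummer} gives $B_{p-a+b-j}=B_{p+n-1}\equiv\frac{n-1}{n}B_n\pmod p$. Substituting and using $a-b-1+j=-(c-j)$ and $a-b+j=-(c-1-j)$, the rational prefactors telescope:
$$B^\#_{a,b,j,p}\equiv\frac{c-j}{c-1-j}\cdot\frac{c-j-1}{c-j}B_{c-j}B_j=B_{c-j}B_j\pmod p.$$
Thus every even-$j$ term of the $B^\#$-sum matches the corresponding term of $\sum_{j=0}^{c}\binom{b+1}{j}B_{c-j}B_j$.

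It remains to account for the boundary and odd-index terms, which is where the parity and range hypotheses enter. Since $a+b$ is odd, $c=b-a+1$ is even, and since $a<b-1$ we have $c\ge4$, so $c-1\ge3$ is odd and $B_{c-1}=0$. For odd $j$, one of the factors $B_j$ or $B_{c-j}$ (respectively $B_{p-a+b-j}$, whose index is odd and at least $p$) is an odd-index Bernoulli number exceeding $1$ and therefore vanishes, so all odd-$j$ terms drop out on both sides. Finally, at $j=c$ the prefactor $a-b-1+j$ equals $0$, so $B^\#_{a,b,c,p}=0$, whereas the first sum contributes $\binom{b+1}{c}B_0B_c=\binom{b+1}{a}B_{b-a+1}$; this single surviving term is exactly the extra summand isolated on the left of the displayed mod-$p$ identity. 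Collecting the even-$j$ matches, the vanishing odd terms, and this boundary contribution proves the identity, and the corollary follows. I expect the main obstacle to be precisely this bookkeeping: verifying that Kummer's hypotheses hold for every invoked index and that the odd-index and $j=c$ terms cancel exactly, since an off-by-one in the range (for instance the excluded case $a=b-1$, where $c=2$ and $B_{c-1}=B_1\ne0$) would spoil the cancellation.
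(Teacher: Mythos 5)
Your route is the intended one (the paper offers no written proof of this corollary beyond ``using Kummer's congruence and properties of Bernoulli numbers,'' so deducing it from case 1 of Theorem~\ref{th:main1} is exactly right), and your term-by-term analysis --- Kummer's congruence collapsing the prefactor $\frac{a-b-1+j}{a-b+j}\cdot\frac{c-j-1}{c-j}$ to $1$ for even $j\le c-2$, the vanishing of all odd-index terms because $c=b-a+1$ is even and $\ge 4$, and the observation that $B^\#_{a,b,c,p}=0$ while $\sum_{j=0}^{c}\binom{b+1}{j}B_{c-j}B_j$ picks up $\binom{b+1}{a}B_{b-a+1}$ at $j=c$ --- is all correct. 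However, the reduction you display has the two sums on the wrong sides. Comparing the corollary with Theorem~\ref{th:main1} and cancelling the common tail $\sum_{j=c+1}^{b}$, what must be shown is
\begin{equation*}
\binom{b+1}{a}B_{b-a+1}+\sum_{j=0}^{c}\binom{b+1}{j}B^\#_{a,b,j,p}\equiv\sum_{j=0}^{c}\binom{b+1}{j}B_{c-j}B_j\pmod{p},
\end{equation*}
i.e.\ the extra term $\binom{b+1}{a}B_{b-a+1}$ must accompany the $B^\#$-sum, not the $B_{c-j}B_j$-sum as in your display. As you wrote it, the identity is false: your own bookkeeping shows the two sums differ by exactly $\binom{b+1}{a}B_{b-a+1}$ (the surviving $j=c$ term of the $B_{c-j}B_j$-sum), so your version asserts $2\binom{b+1}{a}B_{b-a+1}\equiv0\pmod p$, which fails generically (e.g.\ $a=2$, $b=5$ gives $2\cdot15\cdot B_4=-1$). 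This is also why your closing sentence does not close: the isolated summand and the surviving $j=c$ term sit on the \emph{same} side of your displayed congruence and would add rather than cancel. Transposing the identity as above, your computations prove it verbatim and the corollary follows, so the fix is a one-line correction rather than a new idea.
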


\begin{corollary}\label{col:lyginiai}If conditions of Theorem \ref{th:main1} for $a,b,k$ hold and $a,b$ are of the same parity then
\begin{enumerate}
\item for $0<a=b$
\begin{align*}
	\sum_{i=1}^{p-1}i^{b}H_i^{(a)}&\equiv \left(1-\dfrac{a+2}{a+1}p\right)B_1,
\end{align*}
\item for $0<a<b$
\begin{align*}
	\sum_{i=1}^{p-1}i^{b}H_i^{(a)}&\equiv -\left(1+\dfrac{1}{b+1}\binom{b+1}{a+1}\right)pB_{b-a}B_1\pmod{p^2},
\end{align*}
\item for $0<b<a$
\begin{align}\label{eq:main101}
	\sum_{i=1}^{p-1}i^{b}H_i^{(a)}&\equiv -pB^{\#}_{a,b,1,p}\pmod{p^2}.
\end{align}
\end{enumerate}
\end{corollary}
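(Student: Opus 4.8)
The plan is to obtain the three congruences by specializing Theorem~\ref{th:main1} to the regime $a\equiv b\pmod 2$ and then repeatedly invoking the vanishing $B_{2k+1}=0$ for $k\geq 1$. The organizing remark is that equal parity makes $b-a$ even, so $b-a+1$ is odd; moreover, since $p$ is odd while $a-b$ is even, every shifted index obeys $p-a+b-j\equiv 1-j\pmod 2$, so that $B_{p-a+b-j}$ sits at an odd index exactly when $j$ is even. I would first pin down which branch of Theorem~\ref{th:main1} is in force: $0<a=b$ and $0<a<b$ fall under branch~(1); the branch $a=b+1$ never occurs, since it would force $a-b=1$ to be odd; and $0<b<a$ with equal parity gives $a-b\geq 2$, hence $b+1<a$ and branch~(3).

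I would dispose of the pure $B^{\#}$ sums first---branch~(3) in full and the trailing sum of branch~(1). In $B^{\#}_{a,b,j,p}$ the factor $B_{p-a+b-j}B_j$ vanishes unless $j=1$: for even $j$ the index $p-a+b-j$ is odd and at least $p-a\geq 3$, so $B_{p-a+b-j}=0$; for odd $j\geq 3$ we have $B_j=0$; and $j=0$ again meets an odd index $\geq 3$. In branch~(3) this leaves only $j=1$, and as $\binom{b+1}{1}=b+1$ cancels the prefactor we obtain $S_{a,b,1}\equiv -pB^{\#}_{a,b,1,p}$, which is (\ref{eq:main101}). In branch~(1) the trailing sum runs over $j\geq b-a+2\geq 2$, excluding the unique surviving index $j=1$, so it drops out entirely. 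Here I would also check that the retained $B_{p-a+b-1}$ is meaningful modulo $p$: its index is even and, since $a\geq b+2$, at most $p-3$, whence the consequence of von Staudt--Clausen recorded above supplies a $p$-free denominator.

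Next I would evaluate the surviving portion of branch~(1): the leading term together with the Faulhaber sum $\sum_j\binom{b+1}{j}B_{b-a+1-j}B_j$. The leading term carries $B_{b-a+1}$, which vanishes when $a<b$ (odd index $\geq 3$) but equals $B_1$ when $a=b$. In the Faulhaber sum the two indices sum to the odd number $b-a+1$, so exactly one of them is odd; a nonzero term forces that odd index to equal $1$, i.e. $j=1$ or $j=b-a$. For $a<b$ these are distinct (as $b-a\geq 2$) and contribute $(b+1)B_{b-a}B_1$ and $\binom{b+1}{b-a}B_{b-a}B_1=\binom{b+1}{a+1}B_{b-a}B_1$; dividing by $b+1$ yields the corollary's case~(2). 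For $a=b$ the sum reduces to $j\in\{0,1\}$, which together with the leading $B_1$ and the vanishing trailing sum combine (after the prefactor $-p/(a+1)$) into $\bigl(1-\tfrac{a+2}{a+1}p\bigr)B_1$, the corollary's case~(1). It is worth flagging that, unlike the opposite-parity Corollary~\ref{col:nelyginiai}, no appeal to Kummer's congruence (Lemma~\ref{lemma:kummer}) is needed, because every surviving term already carries a small index.

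The work is essentially bookkeeping, and the only place demanding genuine care is ensuring each vanishing statement is applied legitimately: at every shifted index I must verify not merely that $p-a+b-j$ is odd but that it is $\geq 3$, so that $B_{\text{odd}\geq 3}=0$ really applies rather than accidentally landing on $B_1$. The delicate accounting is the Faulhaber sum in case~(2)---confirming that $j=1$ and $j=b-a$ are the sole survivors and that they never coincide---together with the clean split between $a=b$ and $a<b$, where both the leading $B_{b-a+1}$ and the $j=0$ Faulhaber term change their status. Once those parity checks are laid out, the three congruences follow by a routine collection of terms.
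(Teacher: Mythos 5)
Your derivation is correct and is exactly the route the paper intends: the paper gives no written proof of this corollary beyond the remark that it follows from Theorem~\ref{th:main1} via Kummer's congruence and the vanishing of odd-index Bernoulli numbers, and your case analysis (only $j=1$ and $j=b-a$ survive the Faulhaber sum, only $j=1$ survives the $B^{\#}$ sums, branch $a=b+1$ excluded by parity) fills in precisely that computation. Your observation that Kummer's congruence is in fact not needed in the equal-parity case is accurate and a small improvement on the paper's phrasing.
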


Notice that (\ref{eq:main101}) has intriguing similarities with the result of Zhao (\ref{eq:Zhao}) for $S_{a,-b,1}$.
\subsection{Proof of Theorem 2}\label{proof2}
We first compute from Proposition \ref{prop:a+b+c} (case $a=c$) and Theorem \ref{th:main1} that 
\begin{gather}\label{eq:mainExp}
\begin{aligned}
	\sum_{i=1}^{p-1}i^{b}\left(H_i^{(a)}\right)^2=&\left(H_{n}^{(a)}\right)^2H_{p-1}^{(-b)}+H_{p-1}^{(2a)}H_{p-1}^{(-b)}-\sum_{i=1}^{p-1}i^{b}H_i^{(2a)}\\&-2\sum_{i=1}^{p-1}i^{-a}H_i^{(a)}H_i^{(-b)}+2\sum_{i=1}^{p-1}i^{b-a}H_i^{(a)}\\\equiv&2\sum_{i=1}^{p-1}i^{b-a}H_i^{(a)}-\sum_{i=1}^{p-1}i^{b}H_i^{(2a)}\\&-\dfrac2{b+1}\sum_{j=0}^b\binom{b+1}{j}(-1)^jB_j\sum_{i=1}^{p-1}i^{b-a+1-j}H_i^{(a)}\pmod{p^2}.
\end{aligned}
\end{gather}
The double sum can be divided into two parts
\begin{align*}\label{eq:doubleSum}	
		\sum_{j=0}^b&\binom{b+1}{j}(-1)^jB_j\sum_{i=1}^{p-1}i^{b-a+1-j}H_i^{(a)}\\&=\left(\sum_{j=0}^{b-a+1}+\sum_{\substack{j=b-a+2\\a\geq2}}^{b}\right)\binom{b+1}{j}(-1)^jB_j\sum_{i=1}^{p-1}i^{b-a+1-j}H_i^{(a)}.
\end{align*}
Using (\ref{eq:Zhao}), we get for $a\geq2$ that
\begin{align*}
	\dfrac{2}{b+1}&\sum_{\substack{j=b-a+2}}^{b}\binom{b+1}{j}(-1)^jB_j\sum_{i=1}^{p-1}i^{b-a+1-j}H_i^{(a)}\equiv p\sum_{\substack{j=b-a+2\\a\geq2}}^{b}\dfrac{1}{j}\binom{b}{j-1}(-1)^jB^\#_{2a,b,j,p}\\&+\dfrac{p}{b+1}\sum_{\substack{j=b-a+2\\a\geq2}}^{b}\binom{b+1}{j}\dfrac{k_{a,a-b-1+j}}{2a-b+j}(-1)^jB_jB_{p-2a+b-j}\pmod{p^2}.
\end{align*}
Note 
\begin{align*}
	\frac1{b+1}\binom{b+1}{j}\dfrac{k_{a,a-b-1+j}}{2a-b+j}&=(-1)^{a+1}\dfrac{b+1-j}{(a+1)(b+1)(a-b+j)}\binom{2a-b+j-1}{a}\binom{b+1}{j}\\&=\dfrac{(-1)^{a+1}}{(a+1)(a-b+j)}\binom{2a-b+j-1}{a}\left[\binom{b+1}{j}-\binom{b}{j-1}\right]\\&=\dfrac{(-1)^{a+1}}{(a+1)(a-b+j)}\binom{2a-b+j-1}{a}\binom{b}{j}\\&=\dfrac{1}{(a+1)(b-a-j)}\binom{b-a-j}{a}\binom{b}{j}\\&=\dfrac{1}{a(a+1)}\binom{b-a-j-1}{a-1}\binom{b}{j},
\end{align*}
hence 
\begin{align*}
	\dfrac{p}{b+1}&\sum_{\substack{j=b-a+2\\a\geq2}}^{b}\binom{b+1}{j}\dfrac{k_{a,a-b-1+j}}{2a-b+j}(-1)^jB_jB_{p-2a+b-j}\\\equiv&\dfrac{p}{a(a+1)}\sum_{j=b-a+2}^b\binom{b-a-j-1}{a-1}\binom{b}{j}B_jB_{p-2a+b-j}\pmod{p^2}.
\end{align*}
As previously, the rest of the proof falls to analysis of the three cases. First we turn to case $b\leq2a-2$. Since $b-a+1-j+a$ is even, when $2|j$, Collorary \ref{col:lyginiai} can be used
\begin{align*}
\dfrac{2}{b+1}&\sum_{j=0}^{b-a}\binom{b+1}{j}(-1)^jB_j\sum_{i=1}^{p-1}i^{b-a+1-j}H_i^{(a)}\\\equiv&\dfrac{p}{b+1}\sum_{\substack{j=0}}^{b-a}\binom{b+1}{j}B^\#_{2a,b,j,p}-\dfrac{p}{b-a+1}\sum_{\substack{j=0}}^{b-a}\binom{b-a+1}{j}B^\#_{2a,b,j,p}\pmod{p^2}
\end{align*}
thus
\begin{align*}
\dfrac2{b+1}&\sum_{j=0}^b\binom{b+1}{j}(-1)^jB_j\sum_{i=1}^{p-1}i^{b-a+1-j}H_i^{(a)}\\\equiv& \dfrac{p}{b+1}\sum_{\substack{j=0}}^{b}\binom{b+1}{j}B^\#_{2a,b,j,p}-\dfrac{p}{b-a+1}\binom{b}{a}B^\#_{2a,b,b-a+1,p}\\&+\dfrac{p}{a(a+1)}\sum_{j=b-a+2}^b\binom{b-a-j-1}{a-1}\binom{b}{j}B_jB_{p-2a+b-j}\\&-\dfrac{p}{b-a+1}\sum_{\substack{j=0}}^{b-a}\binom{b-a+1}{j}B^\#_{2a,b,j,p}\pmod{p^2}.
\end{align*}
Using this and previous identities we get, that, when $2\leq a<b\leq2a-2$
\begin{align*}
	\sum_{i=1}^{p-1}i^{b}\left(H_i^{(a)}\right)^2\equiv&\dfrac{p}{b-a+1}\binom{b}{a}B^\#_{2a,b,b-a+1,p}-\dfrac{p}{b-a+1}\sum_{\substack{j=0}}^{b-a}\binom{b-a+1}{j}B^\#_{2a,b,j,p}\\&-\dfrac{p}{a(a+1)}\sum_{j=b-a+2}^b\binom{b-a-j-1}{a-1}\binom{b}{j}B_jB_{p-2a+b-j}\pmod{p^2}.
\end{align*}
Now we proceed to $b+1=2a$. Here
\begin{align*}
\dfrac{2}{b+1}&\sum_{j=0}^{b-a}\binom{b+1}{j}(-1)^jB_j\sum_{i=1}^{p-1}i^{b-a+1-j}H_i^{(a)}\\=&\dfrac1a\sum_{i=1}^{p-1}i^{a}H_i^{(a)}+\sum_{i=1}^{p-1}i^{a-1}H_i^{(a)}+\dfrac{1}{a}\sum_{\substack{j=2\\2|j\\a\geq3}}^{a-1}\binom{2a}{j}B_j\sum_{i=1}^{p-1}i^{a-j}H_i^{(a)}\\\equiv&\dfrac{1-p}{a}+\dfrac1a\left(1-\dfrac{a+2}{a+1}p\right)-\dfrac{p}{a}\sum_{\substack{j=1}}^{a-1}\binom{a}{j}B^\#_{2a,2a-1,j,p}\\&+\dfrac{p}{2a}\sum_{\substack{j=1}}^{a-1}\binom{2a}{j}\dfrac{j}{j+1}B_jB_{p-2a+j-1}\pmod{p^2},
\end{align*}
and
\begin{align*}
	\sum_{i=1}^{p-1}i^{b}\left(H_i^{(a)}\right)^2\equiv&\dfrac{1-p}{a}-\dfrac{p}{2a(a+1)}-\dfrac{p}{a}\sum_{\substack{j=1}}^{a-1}\binom{a}{j}B^\#_{2a,2a-1,j,p}\\&-\dfrac{p}{a(a+1)}\sum_{j=a+1}^{2a-1}\binom{a-j-2}{a-1}\binom{2a-1}{j}B_jB_{p-j+1}\pmod{p^2}.
\end{align*}
Finally, if $b+1>2a$ (since $b$ is odd $\Rightarrow$ $b\geq2a+1$) one gets
\begin{align*}
\dfrac{2}{b+1}&\sum_{j=0}^{b-a}\binom{b+1}{j}(-1)^jB_j\sum_{i=1}^{p-1}i^{b-a+1-j}H_i^{(a)}\\=&\dfrac{2}{b+1}\left(\sum_{j=0}^{b-2a}+\sum_{j=b-2a+1}+\sum_{j=b-2a+2}^{b-a}\right)\binom{b+1}{j}B_j\sum_{i=1}^{p-1}i^{b-a+1-j}H_i^{(a)}\\\equiv&\sum_{i=1}^{p-1}i^{b-a}H_i^{a}+\dfrac{p}{b+1}\sum_{\substack{j=b-2a+2}}^{b-a}\binom{b+1}{j}B^\#_{2a,b,j,p}\\&+\dfrac{p}{b+1}\sum_{\substack{j=0\\2|j}}^{b-2a}\binom{b+1}{j}\left(1+\dfrac{1}{b-a+2-j}\binom{b-a+2-j}{a+1}\right)B_{b-2a+1-j}B_{j}\\&-\dfrac{1}{b+1}\binom{b+1}{2a}\left(1-\dfrac{a+2}{a+1}p\right)B_{b-2a+1}\pmod{p^2}.
\end{align*}
Therefore
\begin{align*}
\sum_{i=1}^{p-1}i^{b}\left(H_i^{(a)}\right)^2\equiv&\dfrac{B_{b-2a+1}}{b-a+1}\binom{b-a+1}{a}-\dfrac{p}{b-a+1}\sum_{j=0}^{b-2a+1}\binom{b-a+1}{j}B_{b-2a+1-j}B_{j}\\&-\dfrac{p}{b-a+1}\sum_{j=b-2a+2}^{b-a}\binom{b-a+1}{j}B^\#_{2a,b,j,p}\\&-\dfrac{p}{(a+1)(b+1)}\sum_{\substack{j=0}}^{b-2a+1}\binom{b+1}{j}\binom{b-a+1-j}{a}B_{b-2a+1-j}B_{j}\\&-\dfrac{p}{a(a+1)}\sum_{j=b-a+1}^b\binom{b-a-j-1}{a-1}\binom{b}{j}B_jB_{p-2a+b-j}\pmod{p^2}.
\end{align*}
For $b-2a>1$, using Kummer's congruence we can rearrange the result into
\begin{align*}
\sum_{i=1}^{p-1}i^{b}\left(H_i^{(a)}\right)^2\equiv&\dfrac{1-p}{b-a+1}\binom{b-a+1}{a}B_{b-2a+1}-\dfrac{p}{b-a+1}\sum_{j=0}^{b-a}\binom{b-a+1}{j}B^\#_{2a,b,j,p}\\&-\dfrac{p}{(a+1)(b+1)}\sum_{\substack{j=0}}^{b-2a+1}\binom{b+1}{j}\binom{b-a+1-j}{a}B_{b-2a+1-j}B_{j}\\&-\dfrac{p}{a(a+1)}\sum_{j=b-a+1}^b\binom{b-a-j-1}{a-1}\binom{b}{j}B_jB_{p-2a+b-j}\pmod{p^2},
\end{align*}
which completes the proof.
\section{Particular case congruences}\label{particular}
Our results in combination with works of other authors provide several new congruences. Sun, Tauraso \cite{Tauraso2} and Mestrović \cite{Mestrovic} proved that for $p\geq7$
\begin{equation*}
	\sum_{i=1}^{p-1}i^{-1}H_i^{(2)}\equiv
\begin{cases}
B_{p-3}\pmod{p},\\
\dfrac{3}{2p}H_{p-1}^{(2)}\pmod{p^2}.
\end{cases}
\end{equation*}
This and (\ref{eq:a2}) gives us
\begin{equation*}
	\sum_{i=1}^{p-1}\left(H_i^{(2)}\right)^2\equiv
\begin{cases}
-2B_{p-3}\pmod{p},\\
-\dfrac{3}{p}H_{p-1}^{(2)}\pmod{p^2}.
\end{cases}
\end{equation*}
\begin{equation*}
	\sum_{i=1}^{p-1}i\left(H_i^{(2)}\right)^2\equiv
\begin{cases}
B_{p-3}\pmod{p},\\
\dfrac{3}{2p}H_{p-1}^{(2)}+\dfrac p3B_{p-3}\pmod{p^2}.
\end{cases}
\end{equation*}
Using Proposition \ref{prop:a+b+c} (case $c=0$) we can show that for $p\geq 7$
\begin{equation*}
	\sum_{i=1}^{p-1}H_i^{(3)}H_i^{(1)}\equiv
\begin{cases}
-B_{p-3}\pmod{p},\\
\dfrac23pB_{p-3}+\dfrac{3}{2}pH_{p-1}^{(2)}\pmod{p^2}.
\end{cases}
\end{equation*}
Sun \cite{Sun} proved that, for $p>3$
\begin{equation*}
	\sum_{i=1}^{p-1}iH_i^2\equiv1\pmod{p}.
\end{equation*}
Using Corollary \ref{prop:a+b+c} (case $a=b, c=-1$) we get that
\begin{equation*}
	\sum_{i=1}^{p-1}iH_i^2\equiv \dfrac{(p-4)(p-1)}{4}-\dfrac{p^2}{12}B_{p-3}\pmod{p^3}.
\end{equation*}

\newpage

\end{document}